\documentclass{amsart}
\usepackage{amssymb}
\usepackage{amsfonts}
\usepackage{amssymb}
\usepackage{amsmath}
\usepackage{amsthm}
\usepackage{enumerate}
\usepackage{tabularx}
\usepackage{centernot}
\usepackage{mathtools}
\usepackage{stmaryrd}
\usepackage{amsthm,amssymb}
\usepackage{etoolbox}
\usepackage{tikz}
\usepackage{amssymb}
\usetikzlibrary{matrix}
\usepackage{tikz-cd}
\usepackage{tikz}

\renewcommand{\leq}{\leqslant}
\renewcommand{\geq}{\geqslant}

\makeatletter
\def\subsection{\@startsection{subsection}{3}%
  \z@{.5\linespacing\@plus.7\linespacing}{.3\linespacing}%
  {\bfseries\centering}}
\makeatother

\makeatletter
\def\subsubsection{\@startsection{subsubsection}{3}%
  \z@{.5\linespacing\@plus.7\linespacing}{.3\linespacing}%
  {\centering}}
\makeatother

\makeatletter
\def\myfnt{\ifx\protect\@typeset@protect\expandafter\footnote\else\expandafter\@gobble\fi}
\makeatother

\newtheorem{theorem}{Theorem}

\newtheorem{corollary}[theorem]{Corollary}
\newtheorem{definition}[theorem]{Definition}

\newtheorem{fact}[theorem]{Fact}

\newcounter{claimcounter}
\numberwithin{claimcounter}{theorem}
\newenvironment{claim}{\stepcounter{claimcounter}{\noindent {\underline{\em Claim \theclaimcounter}.}}}{}
\newenvironment{claimproof}[1]{\noindent{{\em Proof.}}\space#1}{\hfill $\rule{0.40em}{0.40em}$}

\newcommand{\pureindep}[1][]{%
  \mathrel{
    \mathop{
      \vcenter{
        \hbox{\oalign{\noalign{\kern-.3ex}\hfil$\vert$\hfil\cr
              \noalign{\kern-.7ex}
              $\smile$\cr\noalign{\kern-.3ex}}}
      }
    }\displaylimits_{#1}
  }
}

\newcommand{\indep}[2]{%
  \mathrel{
    \mathop{
      \vcenter{
        \hbox{%
\oalign{
\noalign{\kern-.3ex}\hfil$\vert$\hfil\cr
              \noalign{\kern-.7ex}
              $\smile$\cr\noalign{\kern-.3ex}
}
}
      }
}^{\!\!\!\!\!#2}_{\!\!\hspace{-0.1em}#1}
  }
}

\newcommand{\displayindep}[2]{%
  \mathrel{
    \mathop{
      \vcenter{
        \hbox{%
\oalign{
\noalign{\kern-.3ex}\hfil$\vert$\hfil\cr
              \noalign{\kern-.7ex}
              $\smile$\cr\noalign{\kern-.3ex}
}
}
      }
}^{\!\!\hspace{-0.1em}#2}_{\!\!\hspace{-0.1em}#1}
  }
}

\newcommand{\displayfindep}[2]{%
  \mathrel{
    \mathop{
      \vcenter{
        \hbox{%
\oalign{
\noalign{\kern-.3ex}\hfil$\vert$\hfil\cr
              \noalign{\kern-.7ex}
              $\smile$\cr\noalign{\kern-.3ex} 
}
}
      }
}^{\!\hspace{-0.14em}#2}_{\!\!\hspace{-0.05em}#1}
  }
}

\begin{document}

\begin{abstract} We prove that no uncountable Polish group can admit a system of generators whose associated length function satisfies the following conditions:
\begin{enumerate}[(i)]
\item if $0 < k < \omega$, then $lg(x) \leq lg(x^k)$;
\item if $lg(y) < k < \omega$ and $x^k = y$, then $x = e$.
\end{enumerate}
In particular, the automorphism group of a countable structure cannot be an uncountable right-angled Artin group. This generalizes results from \cite{shelah} and \cite{solecki}, where this is proved for free and free abelian uncountable groups.
\end{abstract}

\title{No Uncountable Polish Group Can be a Right-Angled Artin Group}
\thanks{Partially supported by European Research Council grant 338821. No. 1112 on Shelah's publication list.}

\author{Gianluca Paolini}
\address{Einstein Institute of Mathematics,  The Hebrew University of Jerusalem, Israel}

\author{Saharon Shelah}
\address{Einstein Institute of Mathematics,  The Hebrew University of Jerusalem, Israel \and Department of Mathematics,  Rutgers University, U.S.A.}

\maketitle


	In a meeting in Durham in 1997, Evans asked if an uncountable free group can be realized as the group of automorphisms of a countable structure. This was settled in the negative by Shelah \cite{shelah}. Independently, in the context of descriptive set theory, Becher and Kechris \cite {kechris} asked if an uncountable Polish group can be free. This was also answered negatively by Shelah \cite{shelah_1}, generalizing the techniques of \cite{shelah}. Inspired by the question of Becher and Kechris, Solecki \cite{solecki} proved that no uncountable Polish group can be free abelian. In this paper we give a general framework for these results, proving that no uncountable Polish group can be a right-angled Artin group (see below for a definition). We actually prove more:
	
	\begin{theorem}\label{main_th} Let $G = (G, \mathfrak{d})$ be an uncountable Polish group and $A$ a group admitting a system of generators whose associated length function satisfies the following conditions:
\begin{enumerate}[(i)]
\item if $0 < k < \omega$, then $lg(x) \leq lg(x^k)$;
\item if $lg(y) < k < \omega$ and $x^k = y$, then $x = e$.
\end{enumerate}
Then $G$ is not isomorphic to $A$, in fact there exists a subgroup $G^*$ of $G$ of size $\mathfrak{b}$ (the bounding number) such that $G^*$ is not embeddable in $A$.
\end{theorem}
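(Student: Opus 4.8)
The plan is to leverage the interplay between the Polish (descriptive-set-theoretic) structure of $G$ and the combinatorial rigidity encoded by the length function on $A$. The two hypotheses are designed to give a notion of ``divisibility obstruction'': condition (ii) says that taking $k$-th roots is essentially trivial once $k$ exceeds the length of the target, and condition (i) says that lengths cannot shrink under taking powers. Together these are precisely the features that free and free abelian groups share, and that allow one to run the Shelah--Solecki machinery uniformly. The strategy is therefore not to use any special algebraic feature of right-angled Artin groups beyond the existence of such a length function.

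First I would recall the standard descriptive-set-theoretic input: in an uncountable Polish group one can find, via a fusion/tree argument, a perfect set of elements indexed by $2^\omega$ or by functions in $\omega^\omega$ so as to control the bounding number $\mathfrak{b}$. Concretely, I would build a system $\langle g_\eta : \eta \in \omega^\omega \rangle$ (or a suitable $\mathfrak{b}$-sized subfamily) together with a sequence of ``root witnesses'': elements whose high powers realize prescribed products of the $g_\eta$. The point of the Polish hypothesis is completeness — I can take infinite products and extract limits along a Cauchy condition controlled by the metric $\mathfrak{d}$ — so I can manufacture elements that are simultaneously $k$-th powers for infinitely many $k$, or whose powers grow in a way that will clash with the length function. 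The bounding number enters because the rate at which I must converge is governed by an unbounded/dominating family, and $\mathfrak{b}$ is the least cardinality of such a family; this is exactly what produces a subgroup $G^*$ of size $\mathfrak{b}$.

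Next, assuming toward a contradiction that there is an embedding (or isomorphism) $\varphi \colon G^* \to A$, I would transport the analytic divisibility phenomena through $\varphi$ and confront them with conditions (i) and (ii). The elements constructed in $G^*$ will have $k$-th roots for unboundedly many $k$; their images in $A$ then have lengths that, by (i), are non-decreasing under powering, yet by (ii) any element admitting a $k$-th root with $k$ exceeding the length of the element must be trivial. Pushing this for sufficiently many $k$ forces the images of nontrivial generators $g_\eta$ to be $e$, contradicting injectivity of $\varphi$. The bookkeeping is to ensure that the lengths $lg(\varphi(g_\eta))$ are bounded by some fixed $n$ while the available roots have exponent $k > n$, which is arranged by the domination argument.

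The main obstacle I expect is the construction step, not the final algebraic contradiction: one must build the family $\langle g_\eta \rangle$ together with compatible high-order roots inside the \emph{given} Polish group, using only the metric and completeness, while simultaneously tracking the length data on the $A$-side (which is not yet available, since $\varphi$ is hypothetical). The resolution is to do the construction purely in $G$ with enough uniformity — typically a tree of approximations with a fusion argument guaranteeing that the limit elements have the required root structure for a cofinal set of exponents governed by a $\mathfrak{b}$-scale — and only afterwards apply $\varphi$ and the length axioms. Verifying that the limits genuinely are $k$-th powers (and not merely approximate ones) is where the completeness of $\mathfrak{d}$ and a careful choice of convergence rate are essential, and this is the technical heart of the proof.
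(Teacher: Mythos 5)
Your overall strategy is the right one --- use completeness of $G$ to solve root equations, index the construction by an unbounded family of size $\mathfrak{b}$, then push through the hypothetical embedding and play conditions (i) and (ii) against each other --- but the proposal has two genuine gaps at exactly the points you yourself flag as the technical heart. First, the construction step as you describe it would fail: you propose to ``manufacture elements that are simultaneously $k$-th powers for infinitely many $k$,'' but no such nontrivial elements need exist in an uncountable Polish group (in $\mathbb{Z}^{\omega}$ with the product topology, an element divisible by infinitely many $k$ is zero), and no perfect-set or fusion argument can produce them. What the paper does instead is pick a single sequence $g_n \to e$ with $g_n \neq e$ and, for each $\eta$ in an unbounded family $\Lambda \subseteq \omega^{\omega}$ of size $\mathfrak{b}$, solve the \emph{chain} of equations $x_{n+1}^{\eta(n)} = x_n g_n$ (solvability is the completeness input, quoted from \cite{shelah_1}); no single element needs to be highly divisible, only each $b_{\eta,n}g_n$ needs one root of one large order $\eta(n)$. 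Your $G^*$ should then be generated by the $g_n$ together with all the witnesses $b_{\eta,n}$.

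Second, your ``bookkeeping'' step is missing the idea that makes it work: after applying the embedding $\pi$, one must get a bound on $lg(\pi(b_{\eta,n}))$ of the form $f_2(n)$ with $f_2$ \emph{independent of} $\eta$, since only then does unboundedness of the family produce some $\eta$ and $n$ with $\eta(n) > f_2(n+2)$. This uniformity is obtained by a pigeonhole argument: stabilize $lg(\pi(b_{\eta,0}))$ to a constant value $m_*$ on an unbounded subfamily $\Lambda_1$, and then prove by induction, using condition (i) in the form $lg(c_{\eta,n+1}) \leq lg(c_{\eta,n+1}^{\eta(n)}) = lg(c_{\eta,n}g'_n) \leq lg(c_{\eta,n}) + lg(g'_n)$, that the lengths grow no faster than $f_2$. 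Without the stabilization the induction has no uniform base case, and the domination argument you appeal to has nothing to dominate. Finally, the endgame is slightly more delicate than ``the images of the generators are forced to be $e$'': condition (ii) first kills $c_{\eta,n+1}$, which turns the next equation into $c_{\eta,n+2}^{\eta(n+1)} = g'_{n+1}$, and one then splits into two cases ($\eta(n+1) > lg(g'_{n+1})$ forces $g'_{n+1}=e$, contradicting injectivity and $g_{n+1}\neq e$; otherwise $\eta(n+1) \leq lg(g'_{n+1})$ contradicts the choice of $n$). These are fixable omissions, but as written the proposal does not yet constitute a proof.
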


	\begin{proof} Let $\zeta = (\zeta_n)_{n < \omega} \in \mathbb{R}^\omega$ be such that $\zeta_n < 2^{-n}$, for every $n < \omega$, and $\bar{g} = (g_n)_{n < \omega} \in G^{\omega}$ such that $g_n \neq e$ and $\mathfrak{d}(g_n, e) < \zeta_n$, for every $n < \omega$. Let $\Lambda$ be a set of power $\mathfrak{b}$ of increasing functions $\eta \in \omega^\omega$ which is unbounded with respect to the partial order of eventual domination. For transparency we also assume that for every $\eta \in \Lambda$ we have $\eta(0) > 0$. For $\eta \in \Lambda$, define the following set of equations:
	$$ \Gamma_{\eta} = \{ x_{n+1}^{\eta(n)} = x_ng_n : n < \omega \}.$$
By \cite{shelah_1}, for every $\eta \in \Lambda$, $\Gamma_{\eta}$ is solvable in $G$. Let $\bar{b}_{\eta} = (b_{\eta, n})_{n < \omega}$ witness it, i.e.:
	$$ \bar{b}_{\eta} \in G^{\omega} \;\; \text{ and } \;\; \bigwedge_{n < \omega} b_{\eta, n+1}^{\eta(n)} = b_{\eta, n}g_n.$$
Let $G^*$ be the subgroup of $G$ generated by $\{ g_n : n < \omega \} \cup \{ b_{\eta, n} : \eta \in \Lambda, n < \omega \}$. Towards contradiction, suppose that $\pi$ is an embedding of $G^*$ into $A$, and let $S$ be a system of generators for $A$ whose associated length function $lg_S = lg$ satisfies conditions (i) and (ii) of the statement of the theorem. For $\eta \in \Lambda$ and $n < \omega$, let:
$$\pi(g_n) = g'_n, \;\; \pi(b_{\eta, n}) = c_{\eta, n} \;\; \text{ and } \;\; m_*(\eta) = lg(c_{\eta, 0}).$$ 
Now, $m_*$ is a function from $\Lambda$ to $\omega$ and so there exists unbounded $\Lambda_1 \subseteq \Lambda$ such that for every $\eta \in \Lambda_1$ the value $m_*(\eta)$ is a constant $m_*$. Fix such a $\Lambda_1$ and $m_*$, and let $f_1, f_2 \in \omega^{\omega}$ increasing satisfying the following:
\begin{enumerate}[(1)]
\item $f_1(n) > lg(g'_n)$;
\item $f_2(n) = (m_* +1) + \sum_{\ell < n} f_1(\ell)$.
\end{enumerate}

\begin{claim} For every $\eta \in \Lambda_1$, $lg(c_{\eta, n}) < f_2(n)$.
\end{claim}

\begin{claimproof} By induction on $n < \omega$. The case $n = 0$ is clear by the choice of $f_1$ and $f_2$. Let $n = m+1$. Because of assumption (i) on $A$, the choice of $\Lambda_1$ and the choice of $f_1$ and $f_2$, we have:
\[ \begin{array}{rcl}
	lg(c_{\eta, n}) & \leq & lg(c_{\eta, n}^{\eta(m)}) \\
					  & =    & lg(c_{\eta, m}g'_m) \\
					  & \leq & lg(c_{\eta, m}) + lg(g'_m) \\
					  & <    & f_2(m) + f_1(m) \\
					  & =    & f_2(n). 
\end{array}	\]
\end{claimproof}

\noindent Now, by the choice of $\Lambda_1$, we can find $\eta \in \Lambda_1$ and $n < \omega$ such that $\eta(n) > f_2(n+2)$. Notice then that by the claim above and the choice of $f_1$ and $f_2$ we have:
\begin{equation}\label{star} 
\eta(n) > f_2(n+1) = f_2(n) + f_1(n) > lg(c_{\eta, n}) + lg(g'_n) \geq lg(c_{\eta, n}g'_n),
\end{equation}
\begin{equation}\label{starstar} 
\eta(n) > f_2(n + 2) \geq f_1(n +1) > lg(g'_{n+1}).
\end{equation}
Thus, by (1) and the fact that $c_{\eta, n+1}^{\eta(n)} = c_{\eta, n}g'_n$, using assumption (ii) we infer that $c_{\eta, n+1} = e$. Hence,
$$c_{\eta, n+2}^{\eta(n+1)} = c_{\eta, n+1}g'_{n+1} = g'_{n+1}.$$
Furthermore, if $\eta(n+1) > lg(g'_{n+1})$, then, again by assumption (ii), we have that $c_{\eta, n+2} = e$, and so $c_{\eta, n+2}^{\eta(n+1)} = g'_{n+1} = e$, which contradicts the choice of $(g_n)_{n < \omega}$. Hence, $\eta(n) < \eta(n+1) \leq lg(g'_{n+1})$, contradicting (2). It follows that the embedding $\pi$ from $G^*$ into $A$ cannot exist.
\end{proof}

	\begin{definition} Given a graph $\Gamma = (E, V)$, the {\em right-angled Artin group} $A(\Gamma)$ is the group with presentation $\langle V \mid ab = ba : a E b \rangle$. 
\end{definition}

Thus, for $\Gamma$ a graph with no edges (resp. a complete graph) $A(\Gamma)$ is a free group (resp. a free abelian group). 


	\begin{definition}  Let $A(\Gamma)$ be a right-angled Artin group and $lg$ its associated length function. We say that an element $g  \in A(\Gamma)$ is cyclically reduced if it cannot be written as $g = h f h^{-1}$ with $lg(g) = lg(f) + 2$.
\end{definition}

	\begin{fact}\label{new_div} Let $A(\Gamma)$ be a right-angled Artin group, $lg$ its associated length function and $g  \in A(\Gamma)$. Then:
	\begin{enumerate}[(1)]
	\item $g$ can be written as $hfh^{-1}$ with $f$ cyclically reduced and $lg(g) = lg(f) + 2 lg(h)$;
	\item if $0 < k < \omega$ and $f$ is cyclically reduced, then  $lg(f^k) = klg(f)$;
	\item if $0 < k < \omega$ and $g = hfh^{-1}$ is as in (1), then $lg(hfh^{-1})^k = klg(f) + 2lg(h)$.
\end{enumerate}
\end{fact}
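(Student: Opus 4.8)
The plan is to establish the three parts of Fact~\ref{new_div} by exploiting the well-understood normal form theory of right-angled Artin groups, where the length function $lg$ counts the number of generators (from $V \cup V^{-1}$) in any reduced word representing an element. The key structural fact I would rely on is that in a RAAG, two reduced words represent the same element if and only if one can be obtained from the other by a finite sequence of \emph{shuffles} (swapping adjacent commuting generators); consequently $lg$ is well defined, and deleting a cancelling pair $ss^{-1}$ that has been shuffled adjacent is the only source of length reduction. I would take this confluence/normal-form result as the standing tool for all three parts.

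For part~(1), I would argue by induction on $lg(g)$. If $g$ is already cyclically reduced we take $h = e$ and $f = g$. Otherwise, by definition $g = h_0 f_0 h_0^{-1}$ with $lg(g) = lg(f_0) + 2$, so $lg(h_0) = 1$; applying the induction hypothesis to $f_0$ (which has strictly smaller length) yields $f_0 = h_1 f h_1^{-1}$ with $f$ cyclically reduced and $lg(f_0) = lg(f) + 2lg(h_1)$, and then setting $h = h_0 h_1$ gives $g = h f h^{-1}$. The length bookkeeping $lg(g) = lg(f) + 2lg(h)$ follows by combining the two equations, provided no extra cancellation occurs when forming $h_0 h_1$; verifying this non-cancellation is where I would need to use that the decomposition $g = h_0 f_0 h_0^{-1}$ realizes the length additively, so that the initial segment $h_0$ of $g$ concatenates with the initial segment $h_1$ of $f_0$ without collapse. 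For part~(2), with $f$ cyclically reduced I would show that concatenating $k$ copies of a reduced word for $f$ produces no cancellation: if $lg(f^k) < k\,lg(f)$, some shuffling would bring a generator at the end of one copy of $f$ adjacent to its inverse at the start of the next copy, and tracking this back through the commutation relations would exhibit $f$ itself as $h f' h^{-1}$ with $lg(f) = lg(f') + 2$, contradicting cyclic reducedness. Part~(3) is then a direct computation: writing $g = hfh^{-1}$ as in (1), we get $g^k = h f^k h^{-1}$, and since the outer $h^{-1}$ and $h$ between consecutive factors cancel completely, $lg(g^k) = lg(f^k) + 2lg(h) = k\,lg(f) + 2lg(h)$ by part~(2).

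The main obstacle I anticipate is part~(2), specifically proving that no cancellation arises when powering a cyclically reduced element. The subtlety is that cancellation in a RAAG need not be between literally adjacent letters — a letter can cancel with a far-away inverse after a chain of commutations — so a naive ``the last letter of $f$ differs from the inverse of the first letter'' argument is insufficient. The clean way around this is to invoke the characterization of cyclically reduced elements in terms of their support and the absence of a generator $s$ such that $s$ (or $s^{-1}$) can be shuffled to both ends of a reduced word for $f$; I would phrase the non-cancellation claim as: a reduced word $w$ for $f$ is cyclically reduced iff $ww$ is reduced, and then iterate. Establishing this equivalence rigorously, rather than the isolated first and last letters, is the crux, and I would lean on the piling/stacking (heap-of-pieces) description of RAAG normal forms if a purely combinatorial shuffle argument becomes unwieldy.
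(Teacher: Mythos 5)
Your outline is essentially correct, but note that the paper itself does not prove this fact: item (1) is discharged by citing the Proposition on p.~38 of Servatius' \emph{Automorphisms of Graph Groups}, and items (2) and (3) are simply declared folklore. So you are doing genuinely more work than the authors, and what you describe is in substance the argument from the literature: control cancellation via the shuffle (commutation-of-adjacent-generators) normal form, prove (1) by induction on $lg(g)$, and reduce (2) to the statement that a cyclically reduced word powers without cancellation. Two refinements to your sketch. First, the non-cancellation worry in part (1) dissolves without any word-level analysis: from $lg(g)=lg(f)+2lg(h_1)+2$, subadditivity $lg(g)\leq lg(f)+2lg(h)$, and $lg(h)=lg(h_0h_1)\leq lg(h_1)+1$, one gets $lg(h)=lg(h_1)+1$ and the bookkeeping closes automatically. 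Second, part (3) is \emph{not} the direct computation you describe: the telescoping $g^k=hf^kh^{-1}$ is an identity of group elements and yields only $lg(g^k)\leq k\,lg(f)+2lg(h)$; the matching lower bound requires showing that after shuffling no letter of $h$ cancels into $f^k$ or into $h^{-1}$, which is exactly the localization-of-cancellation lemma you already flag as the crux of part (2) (any such cancelling pair could be pushed into a single block $hfh^{-1}$, contradicting $lg(hfh^{-1})=lg(f)+2lg(h)$ from part (1)). Relatedly, the ``iterate'' in your criterion ($w$ cyclically reduced iff $ww$ reduced) must be run as an honest induction on $k$ using that same lemma, since reducedness of $w^2$ does not formally imply reducedness of $w^3$; once you have the lemma that any cancellation in a product of reduced blocks can be traced to a cancellation between two adjacent blocks, all three items fall out uniformly.
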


	\begin{proof} Item (1) is proved in \cite[Proposition on pg.38]{servatius}. The rest is folklore.
\end{proof}

	\begin{corollary} No uncountable Polish group can be a right-angled Artin group.
\end{corollary}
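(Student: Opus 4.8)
The plan is to deduce the corollary directly from Theorem \ref{main_th}: it suffices to exhibit, for an arbitrary right-angled Artin group $A(\Gamma)$, a system of generators whose associated length function satisfies conditions (i) and (ii) of the theorem. The natural candidate is the standard generating set $V$ (the vertices of $\Gamma$), and the whole point is that Fact \ref{new_div} packages exactly the information about the word length $lg$ needed to check these two conditions. So the corollary reduces to two short verifications, after which Theorem \ref{main_th} applies with $A = A(\Gamma)$.

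For condition (i), given $g \in A(\Gamma)$ and $0 < k < \omega$, I would first write $g = hfh^{-1}$ with $f$ cyclically reduced and $lg(g) = lg(f) + 2\,lg(h)$, using Fact \ref{new_div}(1). By Fact \ref{new_div}(3), $lg(g^k) = k\,lg(f) + 2\,lg(h)$. Since $k \geq 1$ forces $k\,lg(f) \geq lg(f)$, we immediately obtain $lg(g^k) \geq lg(f) + 2\,lg(h) = lg(g)$, which is exactly (i).

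For condition (ii), suppose $lg(y) < k < \omega$ and $x^k = y$; the goal is $x = e$. Again write $x = hfh^{-1}$ as in Fact \ref{new_div}(1), so that $lg(y) = lg(x^k) = k\,lg(f) + 2\,lg(h)$ by Fact \ref{new_div}(3). The key observation is that the cyclically reduced core contributes the term $k\,lg(f)$, which grows with $k$: if $lg(f) \geq 1$, then $lg(y) \geq k$, contradicting $lg(y) < k$. Hence $lg(f) = 0$, i.e.\ $f = e$, and therefore $x = h e h^{-1} = e$, as required.

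With both conditions verified, Theorem \ref{main_th} shows that no uncountable Polish group is isomorphic to $A(\Gamma)$, and since the right-angled Artin groups are precisely the groups $A(\Gamma)$, the corollary follows. The only genuine content lies in Fact \ref{new_div} — in particular the linear-in-$k$ growth of the length of powers of a cyclically reduced element — which is exactly what makes condition (ii) go through; once that structural fact is granted, both verifications are immediate, and I expect no real obstacle beyond correctly invoking the normal form $g = hfh^{-1}$ in each case.
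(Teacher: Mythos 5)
Your proposal is correct and follows the paper's proof exactly: the paper deduces the corollary from Theorem \ref{main_th} by noting that Fact \ref{new_div} makes conditions (i) and (ii) "clear," and your two verifications are precisely the computations being left implicit there. No differences in approach.
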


	\begin{proof} By Theorem \ref{main_th} it suffices to show that for every right-angled Artin group $A(\Gamma)$ the associated length function $lg$ satisfies conditions (i) and (ii) of the theorem, but by Fact \ref{new_div}  this is clear. 
\end{proof}

	As well known, the automorphism group of a countable structure is naturally endowed with a Polish topology which respects the group structure, hence:

	\begin{corollary} The automorphism group of a countable structure can not be an uncountable right-angled Artin group.
\end{corollary}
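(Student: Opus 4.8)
The plan is to obtain this corollary as an immediate consequence of the preceding one, so the only real work is to recall why the automorphism group of a countable structure is always a Polish group. Once that is in hand, the rest is a one-line reduction.

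First I would fix a countable structure $M$ and describe the standard topology on its automorphism group. Identifying the domain of $M$ with $\omega$, the group $\mathrm{Aut}(M)$ sits inside the symmetric group $\mathrm{Sym}(\omega)$, which is itself a closed subgroup of the Baire space $\omega^\omega$ and hence a Polish group under the topology of pointwise convergence. Here a basic open neighborhood of a given automorphism consists of those automorphisms agreeing with it on a fixed finite subset of $\omega$; equivalently, two automorphisms are declared close when they coincide on a long initial segment of $\omega$, and this topology is induced by a complete separable metric $\mathfrak{d}$ compatible with the group operations.

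The key step is to check that $\mathrm{Aut}(M)$ is a \emph{closed} subgroup of $\mathrm{Sym}(\omega)$. Being an automorphism is equivalent to preserving each of the countably many relations, functions, and constants of $M$, and each such preservation requirement is a closed condition in the topology of pointwise convergence; their intersection is therefore closed. Since a closed subgroup of a Polish group is again a Polish group in the subspace topology, with multiplication and inversion still continuous, the pair $(\mathrm{Aut}(M), \mathfrak{d})$ is a Polish group.

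Finally I would invoke the previous corollary directly: if $\mathrm{Aut}(M)$ were an uncountable right-angled Artin group, then it would be an uncountable Polish group isomorphic to a right-angled Artin group, contradicting the statement that no such group exists. I expect no genuine obstacle in this argument, since the one substantive ingredient — that the automorphism group of a countable structure carries a Polish group topology — is entirely classical and was already flagged as well known in the remark immediately preceding the statement.
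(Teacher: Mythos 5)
Your proposal is correct and follows exactly the paper's route: the paper likewise treats the Polishness of $\mathrm{Aut}(M)$ as the well-known ingredient and derives the statement immediately from the preceding corollary. Your expansion of why $\mathrm{Aut}(M)$ is a closed subgroup of $\mathrm{Sym}(\omega)$ is accurate but is left implicit in the paper.
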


	The situation is different for right-angled Coxeter groups, in fact the structure $M$ with $\omega$ many disjoint unary predicates of size $2$ is such that $Aut(M) = (\mathbb{Z}_2)^\omega$, i.e. $Aut(M)$ is the right-angled Coxeter group on $K_{\mathfrak{c}}$ (a complete graph on continuum many vertices). Notice that in this group for any $a \neq b \in K_{\mathfrak{c}}$ we have:
	\begin{enumerate}[(i)]
	\item $(ab)^2 = 1$;
	\item $lg(ab) = 2 < 3$, $(ab)^3 = ab$ and $ab \neq e$.
	\end{enumerate}
We hope to investigate realizability of uncountable right-angled Coxeter groups as groups of automorphisms of countable structures in a future work.

\end{document}